\newtheorem{lemma}{Lemma}
\newtheorem{theorem}{Theorem}
\newtheorem{example}{Example}
\begin{document}

\title[Article Title]{Exponentially Fitted Finite Difference Approximation for Singularly Perturbed Fredholm Integro-Differential Equation}


\author[1]{\fnm{Mehebub} \sur{ Alam}}\email{mehebubalam7172@kgpian.iitkgp.ac.in}

\author*[1]{\fnm{Rajni Kant} \sur{ Pandey}}\email{rkp@maths.iitkgp.ac.in}


\affil[1]{\orgdiv{Department of Mathematics}, \orgname{Indian Institute of Technology}, \orgaddress{ \city{Kharagpur}, \postcode{721302}, \country{India}}}




\abstract{In this paper, we concentrate on solving second-order singularly perturbed Fredholm integro-differential equations (SPFIDEs). It is well known that solving these equations analytically is a challenging endeavor because of the presence of boundary and interior layers within the domain. To overcome these challenges, we develop a fitted second-order difference scheme that can capture the layer behavior of the solution accurately and efficiently, which is again, based on the integral identities with exponential basis functions, the composite trapezoidal rule, and an appropriate interpolating quadrature rules with the remainder terms in the integral form on a piecewise uniform mesh. Hence, our numerical method acts as a superior alternative to the existing methods in the literature. Further, using appropriate techniques in error analysis the scheme's convergence and stability have been studied in the discrete max norm. We have provided necessary experimental evidence that corroborates the theoretical results with a high degree of accuracy.}
\keywords{Fredholm integro-differential equations, Singularly perturbed, Uniform convergence, fitted difference scheme}



\maketitle

\section{Introduction}
Fredholm integro-differential equations (FIDEs) play a significant role in various areas, such as mechanics, chemistry, electrostatics, physics, biology, fluid dynamics, astronomy, and so on \citep{kythe2002computational,polyanin2008handbook,cont2005integro}. Researchers have developed several theories, numerical calculations, and analyses for FIDEs, since these equations are crucial for the modeling of numerous phenomena in science and engineering. Various semi-analytical approaches, such as the Legendre polynomial approximation \citep{bildik2010comparison}, variational iteration method \citep{hamoud2019usage}, and differential transform method \citep{ziyaee2015differential} have been suggested in recent years. Furthermore, many numerical methods have been proposed in recent times. These include the Galerkin-Chebyshev wavelets method \citep{henka2022numerical}, the exponential spline method \citep{tahernezhad2020exponential}, the Nyström method \citep{tair2021solving}, the extrapolation method \citep{brezinski2019extrapolation}, and so on.  However, these studies have dealt with regular cases only.
\par Here we shall concern with the second-order FIDE of the form
  \begin{equation}
  \label{equation:1}
       \mathcal{L}_\epsilon 
 \mathit{v}:=\epsilon \mathit{v}''(\xi)+a(\xi) \mathit{v}'(\xi)=f(\xi)+\lambda \int_0^T K(\xi,\eta)\mathit{v}(\eta) \mathrm{d} \eta, \ \ \xi\in \Gamma :=(0,T),
  \end{equation}
\begin{equation}
\label{equation:2}
    \mathit{v}(0)=\alpha,\ \mathit{v}(T)=\beta,
\end{equation}
where, $0<\epsilon \leq 1,$ is a tiny parameter, $\lambda \in \mathbb{R}, $  $\Bar{\Gamma} = [0, T]$, and the functions $a(\xi) \geq \bar{a} > 0,$ where $\bar{a}$ is a number,  $f(\xi) \  (\xi \in \bar{\Gamma})$,\ $\  K(\xi, \eta)  \  ( (\xi, \eta) \in \bar{\Gamma} \times \bar{\Gamma}),\ \mathit{v}(\xi)\   (\xi \in \bar{\Gamma})$ are
sufficiently smooth. Singularly perturbed differential equations are special equations that usually involve a tiny number $\epsilon$ multiplying the highest order terms in the equations. When we solve these equations, we observe various phenomena happening at different scales. In certain narrow parts of the problem space, some derivatives change much faster than others. These narrow regions with rapid changes are called interior or boundary layers, depending on where they occur. Such equations are common in mathematical problems, for example: the study of moving air and how it affects structures, the behavior of fluids, how electricity behaves in complicated situations, different ways to understand how populations grow, creating models for neural networks, materials that remember their previous state, and mathematical models for how tiny particles move in a chaotic fluid \citep{qin2014integral,ahmad2008some,nieto2007new}. Usual discretization methods for solving problems with very small variations are known to be unstable and often do not provide good solutions when the variations are extremely small. Hence, there is a need to create consistent numerical approaches to tackle such problems.

Recent years have witnessed a substantial quantity of scholarly investigation devoted to the numerical solution of integro-differential equations that are singularly perturbed. Several robust difference methods for SPFIDEs have been suggested in the literature \citep{uniform_amiraliyev_2018,amiraliyev2020fitted,robust_durmaz_2021,durmaz2022numerical,durmaz2022parameter,cakir2022numerical,durmaz2023efficient,amiri2023effective}.
\cite{uniformly_iragi_2020}, \cite{second_mbroh_2020}, \cite{panda2021second}, and  \cite{liu2023novel} have introduced the numerous numerical approaches for singularly perturbed Volterra integro-differential equations (SPVIDEs).
SPVIDEs with delay have been investigated on uniform meshes in \citep{kudu2016finite,yapman2019convergence,amiraliyev2019fitted}. \cite{cakir2022exponentially,cakir2022new} have presented new difference schemes for first-order mixed Volterra-Fredholm integro-differential equations that are singularly perturbed. They also have proposed a novel and reliable difference scheme for solving second-order Volterra-Fredholm integro-differential equation with boundary layer \citep{cakir2022fitted}. Later, \cite{durmaz2023numerical} has explored a robust numerical approach for the same equations on a piecewise uniform mesh.
\par Although there is still a lack of extensive study on the numerical solution of SPFIDEs.  \cite{uniform_cimen_2021} developed a uniform numerical method with $O(\mathcal{N}^{-1})$ accuracy for the problem (\ref{equation:1})-(\ref{equation:2}) on a uniform mesh, where $\mathcal{N}$ is the mesh parameter. Our aim is to improve the accuracy of the method given in \citep{uniform_cimen_2021} on a non-uniform mesh utilizing the integral identities with the use of exponential basis functions and interpolating quadrature rules. This will capture the rapid variation near the boundary layers more accurately. 
\par This study is organized subsequently. In Section 2, we report some preliminary work that is relevant to the study. Section 3, proposes a difference scheme for SPFIDE. Later, in Section 4 we discuss the error analysis for the scheme. Then,  in Section 5, we present some numerical results that illustrate the scheme's performance. Finally, we conclude our work in Section 6.
\section{Preliminaries}

We shall use $C$ to represent a generic constant independent of the mesh parameter and $\epsilon$. The notation $\|\phi\|_{\infty}$ signifies the max norm for any continuous function $\phi(\xi)$ over the associated closed interval.\\
The development and convergence study of the appropriate numerical solution will be aided by the estimates given in the next lemma. These estimates will be used in the subsequent parts.
\begin{lemma} 
\label{Lemma:1}
Consider that $ f,\ a \in C^{2}(\bar{\Gamma}),\ K \in C^{2}(\bar{\Gamma} \times \bar{\Gamma})$ with $a(\xi) \geq \bar{a}>0$, and

$$
|\lambda |<\frac{\bar{a}}{\bar{K} T}
$$ 
where
$\bar{K}=\underset{\xi \in \bar{\Gamma}  }{\max} \int_0^T |K(\xi,\zeta)| \mathrm{d} \zeta,$
then, the solution $\mathit{v}$ of (\ref{equation:1})-(\ref{equation:2}) follows:

\begin{equation}
\label{equation:3}
\|\mathit{v}\|_{\infty} \leq C
\end{equation}

\begin{flalign}
\label{equation:4}
&\left|\mathit{v}^{\prime}(\xi)\right| \leq C\left(1+\frac{1}{\epsilon} e^{-\frac{\bar{a} \xi}{\epsilon}}\right),\\ \label{equation:5} 
&\left|\mathit{v}^{\prime \prime}(\xi)\right| \leq C\left(1+\frac{1}{\epsilon^2} e^{-\frac{\bar{a} \xi}{\epsilon}}\right).
\end{flalign}
\end{lemma}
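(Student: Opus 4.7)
My strategy is to treat the integral term as an $\epsilon$-uniformly bounded inhomogeneity and apply convection-diffusion ODE techniques. Setting $F(\xi):=f(\xi)+\lambda\int_0^T K(\xi,\eta)v(\eta)\,d\eta$, equation (\ref{equation:1}) reads $\epsilon v''+a(\xi)v'=F(\xi)$ with $a\ge\bar a>0$. For (\ref{equation:3}), a standard barrier argument with $\phi(\xi)=\max(|\alpha|,|\beta|)+\|F\|_\infty\,\xi/\bar a$ yields $\|v\|_\infty\le\max(|\alpha|,|\beta|)+(T/\bar a)\|F\|_\infty$. Inserting $\|F\|_\infty\le\|f\|_\infty+|\lambda|\bar K\|v\|_\infty$ and using the hypothesis $|\lambda|<\bar a/(\bar KT)$ to keep the coefficient of $\|v\|_\infty$ positive closes the bootstrap and gives (\ref{equation:3}). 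As a byproduct, differentiation under the integral sign shows that $F$ is bounded in $C^1(\bar\Gamma)$ uniformly in $\epsilon$.

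For (\ref{equation:4}), I apply the integrating factor $e^{A(\xi)/\epsilon}$ to $\epsilon(v')'+a v'=F$, with $A(\xi):=\int_0^\xi a(s)\,ds$, to obtain
$$v'(\xi)=v'(0)\,e^{-A(\xi)/\epsilon}+\frac{1}{\epsilon}\int_0^\xi F(t)\,e^{-(A(\xi)-A(t))/\epsilon}\,dt,$$
whose integral term is $O(1)$ since $A(\xi)-A(t)\ge\bar a(\xi-t)$. To pin down $v'(0)$, I integrate this representation over $[0,T]$ and use $v(T)-v(0)=\beta-\alpha$; swapping order in the resulting double integral gives an $O(1)$ contribution, while the substitution $u=A(t)/\epsilon$ shows $\int_0^T e^{-A(t)/\epsilon}\,dt\ge c\epsilon$ for small $\epsilon$. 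Hence $|v'(0)|\le C/\epsilon$, and substituting back yields (\ref{equation:4}).

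The bound (\ref{equation:5}) is the subtle step. A naive application of $v''=(F-av')/\epsilon$ together with (\ref{equation:4}) only delivers $|v''|\le C(\epsilon^{-1}+\epsilon^{-2}e^{-\bar a\xi/\epsilon})$, which is too large outside the layer. The remedy is to integrate by parts on the convolution via $a(t)\,e^{-(A(\xi)-A(t))/\epsilon}=\epsilon\,\partial_t\,e^{-(A(\xi)-A(t))/\epsilon}$, obtaining
$$\frac{1}{\epsilon}\int_0^\xi F(t)e^{-(A(\xi)-A(t))/\epsilon}dt=\frac{F(\xi)}{a(\xi)}-\frac{F(0)}{a(0)}e^{-A(\xi)/\epsilon}-\int_0^\xi\!\Bigl(\frac{F}{a}\Bigr)'\!(t)\,e^{-(A(\xi)-A(t))/\epsilon}dt.$$
Substituting into the representation for $v'$ and forming $F-av'$ causes the $F(\xi)$-piece to cancel exactly, so $F-av'$ reduces to an $O(\epsilon^{-1}e^{-\bar a\xi/\epsilon})$ layer term (with coefficient $v'(0)-F(0)/a(0)$) plus an $O(\epsilon)$ remainder; dividing by $\epsilon$ yields (\ref{equation:5}). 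The main obstacle is precisely this cancellation: it implicitly encodes the reduced-equation identity $a v_0'=F$ (satisfied by the outer solution), and without exploiting it, or without separately constructing an outer-plus-layer decomposition of $v$, one cannot recover the sharp $O(1)$ behavior of $v''$ in the non-layer region.
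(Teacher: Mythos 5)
Your proposal is correct, and for the first two bounds it follows essentially the same path as the paper (which simply defers (\ref{equation:3}) and (\ref{equation:4}) to the argument of Cimen--Cakir: a barrier/bootstrap estimate for $\|\mathit{v}\|_\infty$ using $|\lambda|<\bar{a}/(\bar{K}T)$, then the integrating-factor representation of $\mathit{v}'$ combined with the boundary data to get $|\mathit{v}'(0)|\leq C\epsilon^{-1}$). Where you genuinely diverge is (\ref{equation:5}). The paper differentiates equation (\ref{equation:1}), sets $U=\mathit{v}'$ so that $\mathit{v}''=U'$ solves a first-order problem with forcing $F=f'+\lambda\int_0^T\partial_\xi K\,\mathit{v}\,\mathrm{d}\eta-a'\mathit{v}'$, seeds the integration with $|\mathit{v}''(0)|\leq C\epsilon^{-2}$, and asserts that $\epsilon^{-1}\int_0^\xi F(\zeta)e^{-(Q(\xi)-Q(\zeta))/\epsilon}\mathrm{d}\zeta$ is $O(1)$ --- a step that is delicate because this $F$ contains $a'\mathit{v}'$, which is itself $O(\epsilon^{-1})$ in the layer (one must absorb the resulting $\epsilon^{-2}\xi e^{-\bar{a}\xi/\epsilon}$ term into the layer function, which the paper does not spell out). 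You instead stay with the representation of $\mathit{v}'$, integrate by parts using $a(t)e^{-(A(\xi)-A(t))/\epsilon}=\epsilon\,\partial_t e^{-(A(\xi)-A(t))/\epsilon}$ to expose the reduced-equation cancellation $a\mathit{v}'\approx F$, and read off $\mathit{v}''=(F-a\mathit{v}')/\epsilon$ as an $O(1)$ term plus an $O(\epsilon^{-2}e^{-\bar{a}\xi/\epsilon})$ layer term. Your remainder integrand $(F/a)'$ is genuinely bounded (it only needs $f\in C^1$, $\partial_\xi K$ bounded, and (\ref{equation:3})), so your route avoids both the second differentiation of the data and the unbounded forcing that the paper's proof glosses over; the paper's route, in exchange, is shorter and reuses the first-derivative machinery verbatim. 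Both yield (\ref{equation:5}).
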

\begin{proof}
To prove the first two inequalities (\ref{equation:3}) and (\ref{equation:4}), we follow a similar approach as the one in \cite{uniform_cimen_2021}. For the third one,  
differentiating equation (\ref{equation:1}), we obtain
\begin{equation}
\label{equation:6}
\epsilon U''(\xi)+ a(\xi) U'(\xi)= F(\xi),
\end{equation}
where $U'(\xi)=\mathit{v}''(\xi)$ and $F(\xi)=f'(\xi)+\lambda \int_0^T \frac{\partial K}{\partial \xi}\mathit{v}(\eta)\mathrm{d}\eta -a'(\xi)\mathit{v}'(\xi).$\\
From (\ref{equation:1}) and (\ref{equation:4}), we obtain:
\begin{equation}
\label{equation:7}
|\mathit{v}''(0)| \leq \frac{C}{\epsilon^2}.
\end{equation}
Integrating equation (\ref{equation:6}) from $0$ to $\xi$, we get:
\begin{equation}
\label{equation:8}
U'(\xi)=U'(0) e^{-Q(\xi)\epsilon^{-1}}+\epsilon^{-1} \int_0^\xi F(\zeta) e^{-(Q(\xi)-Q(\zeta))\epsilon^{-1}} \mathrm{d}\zeta,
\end{equation}
where
$$Q(\xi)=\int_0^\xi a(\tau) \mathrm{d}\tau.$$
Since 
$$\left|\epsilon^{-1}\int_0^\xi F(\zeta) e^{-(Q(\xi)-Q(\zeta))\epsilon^{-1}} \mathrm{d}\zeta\right| \leq C.$$
From (\ref{equation:8}), we have
\begin{equation} \label{equation:9}
\mathit{v}''(\xi)=C \epsilon^{-2} e^{-\epsilon \bar{a} \xi}+C,
\end{equation}
immediately leads to (\ref{equation:5}).

\end{proof}

 \section{Proposed difference scheme}  
Consider $\Gamma_\mathcal{N}$ as a non-uniform mesh on $\Gamma$: \begin{equation*}
\Gamma_\mathcal{N}= \{ 0<\xi_1<\xi_2<\ldots<\xi_{\mathcal{N}-1}, h_i=\xi_i-\xi_{i-1}\}
\end{equation*}
and 
\begin{equation*}
    \bar{\Gamma }_\mathcal{N}=\Gamma_\mathcal{N} \cup \{\xi_0=0,\ \xi_\mathcal{N}=T\}.
\end{equation*}
We use the following difference rules to define a mesh function $\phi(\xi)$ on the mesh $\Gamma_\mathcal{N}$:
\begin{equation*}
    \phi_i=\phi(\xi_i),\hspace{0.3cm}  \hbar_i=\frac{h_i+h_{i+1}}{2},\  i=1,2,\ldots \mathcal{N}-1,\   \hbar_{0}=\frac{h_{1}}{2}, \hbar_{\mathcal{N}}=\frac{h_{\mathcal{N}}}{2},\hspace{.3cm}   \|\phi\|_{\infty, \Bar{\Gamma}_\mathcal{N}}:=\max_{1 \leq i \leq \mathcal{N}} |\phi_i|,
\end{equation*}
\begin{equation*}
\phi_{\bar{\xi},i}=\frac{\phi_i-\phi_{i-1}}{h_i}, \hspace{1cm} \phi_{\xi,i}=\frac{\phi_{i+1}-\phi_i}{h_{i+1}}, \hspace{1cm} \phi_{\Bar{\xi} \xi,i}=\frac{\phi_{\xi,i}-\phi_{\Bar{\xi},i}}{\hbar_{i}}, \hspace{1cm} \phi_{\stackrel{0}{\xi},i}=\frac{\phi_{i+1}-\phi_{i-1}}{2\hbar_{i}} .
\end{equation*}
We establish the difference scheme on $\Gamma_\mathcal{N}$ for the problem (\ref{equation:1})-(\ref{equation:2}). We split each of the subintervals $[0, \rho]$ and $[\rho, T]$ for an even number $\mathcal{N}$ into $\frac{\mathcal{N}}{2}$ equidistant sub intervals. $\rho$, the transition point that separates fine and coarse portions of the mesh is presented as
$$\rho = \text{min}\{\frac{T}{2}
,\ \bar{a}^{-1}\epsilon \ln \mathcal{N}\}.$$\\
We use $h^{(1)}$ for step length in $[0,\rho]$ and $h^{(2)}$ for step length in $[\rho,T]$. Thus, mesh step sizes hold
$$h^{(1)} = \frac{2\rho}{\mathcal{N}},\  h^{(2)} =\frac{2(T- \rho)}{\mathcal{N}}
.$$
\\
The mesh points of $\Gamma_\mathcal{N}$ are  identified as:
$$ \xi_i=\begin{cases}
ih^{(1)}& \text{if}\  
 i=0,1,\ldots,\frac{\mathcal{N}}{2}\\
\left(i-\frac{\mathcal{N}}{2}\right)h^{(2)}+\rho & \text{if}\   i=\frac{\mathcal{N}}{2}+1,\ldots,\mathcal{N}.
\end{cases}$$
Let us begin by considering the integral identity for the equation (\ref{equation:1}):
\begin{equation}
\label{equation:10}
\hbar^{-1}_i\int_{\xi_{i-1}}^{\xi_{i+1}}\left[\mathcal{L}_\epsilon \mathit{v}(\xi)\right]\psi_i(\xi)\mathrm{d}\xi= \hbar^{-1}_i \int_{\xi_{i-1}}^{\xi_{i+1}} \left[f(\xi)+ \lambda \int_0^T K(\xi,\eta)\mathit{v}(\eta)\right]\psi_i(\xi)\mathrm{d}\xi,\ i=1,2,\ldots,\mathcal{N}-1,
\end{equation}
using the basis function is specified as:
\begin{equation}
\label{equation:11}
\psi_i(\xi)=\begin{cases} 
      \psi_i^{(1)}(\xi)=\frac{e^{\frac{a_i(\xi-\xi_{i-1})}{\epsilon}}-1}{e^{\frac{a_ih_i}{\epsilon}}-1},&\xi\in(\xi_{i-1},\xi_{i}) \\
      \psi_i^{(2)}(\xi)=\frac{1-e^{-\frac{a_i(\xi_{i+1}-\xi)}{\epsilon}}}{1-e^{-\frac{a_ih_{i+1}}{\epsilon}}},&\xi\in(\xi_{i},\xi_{i+1})\\
      0 ,& \xi \notin(\xi_{i-1},\xi_{i+1}), 
   \end{cases}
\end{equation}
where $\psi^{(1)}_i(\xi)$  and $\psi^{(2)}_i(\xi)$ are the solutions to the following equations:

\begin{align*}
&\epsilon \psi_i^{''} (\xi) -a_i \psi_i^{'} (\xi)=0,\ \  \xi_{i-1}<\xi<\xi_i,\\
&\psi_i(\xi_{i-1})= 0,\ \ \psi_i(\xi_{i})= 1.
\end{align*}
and
\begin{align*}
&\epsilon \psi_i^{''} (\xi) -a_i \psi_i^{'} (\xi)=0,\ \  \xi_{i-1}<\xi<\xi_i,\\
&\psi_i(\xi_{i})= 1,\ \ \psi_i(\xi_{i+1})= 0,
\end{align*}
respectively.
For the difference part from (\ref{equation:10}), we obtain the following by using the appropriate interpolating quadrature formulas (see, e.g., \cite{amiraliyev1995difference}):
\begin{flalign}
\label{equation:12}
\nonumber
\hbar^{-1}_i \int_{\xi_{i-1}}^{\xi_{i+1}} [\mathcal{L}_\epsilon\mathit{v}]\psi_i(\xi)\mathrm{d}\xi&=\hbar^{-1}_i \left[\int_{\xi_{i-1}}^{\xi_{i+1}}\epsilon \mathit{v}^{''}\psi_i(\xi)\mathrm{d}\xi+\int_{\xi_{i-1}}^{\xi_{i+1}}a(\xi)\mathit{v}^{'}(\xi)\psi_i(\xi)\mathrm{d}\xi\right]\\
& =\epsilon \mathit{v}_{\xi\bar{\xi},i}+a_i\left(\chi_i^{(1)}\mathit{v}_{\bar{\xi},i}+\chi_i^{(2)} \mathit{v}_{\xi,i}\right)+\hbar^{-1}_i \int_{\xi_{i-1}}^{\xi_{i+1}} \left(a(\xi)-a(\xi_i)\right)\mathit{v}(\xi)\psi_i(\xi)\mathrm{d}\xi,
\end{flalign}
where
\begin{equation}
\label{equation:13}
\chi^{(1)}_i=\hbar^{-1}_i\left[\frac{\epsilon}{a_i}-\frac{h_i}{e^{\frac{a_ih_i}{\epsilon}}-1} \right],
\end{equation}
and
\begin{equation}
\label{equation:14}
\chi^{(2)}_i=\hbar^{-1}_i\left[\frac{h_{i+1}}{1-e^{\frac{a_ih_{i+1}}{\epsilon}}}-\frac{\epsilon}{a_i} \right].
\end{equation}
With respect to the mesh points $\xi_i,\xi_{i+1},$ the Newton interpolation formula yields:
\begin{equation*}
 a(\xi)=a(\xi_i)+(\xi-\xi_i)a_{\xi,i}+\frac{1}{2}(\xi-\xi_i)(\xi-\xi_{i+1})a^{''}(\tau_i(\xi)).   
\end{equation*}
Consequently, we derive: 
\begin{flalign}
  \hbar^{-1}_i \int_{\xi_{i-1}}^{\xi_{i+1}} \left(a(\xi)-a(\xi_i)\right)\mathit{v}'(\xi)\psi_i(\xi)\mathrm{d}\xi =&
\hbar^{-1}_ia_{\xi,i} \int_{\xi_{i-1}}^{\xi_{i+1}} (\xi-\xi_i)\mathit{v}'\psi_i(\xi)\mathrm{d}\xi+ \nonumber\\&
\frac{1}{2} \hbar^{-1}_i \int_{\xi_{i-1}}^{\xi_{i+1}} (\xi-\xi_i)(\xi-\xi_{i+1})a''(\tau_i(\xi))\mathit{v}'(\xi)\psi_i(\xi)\mathrm{d}\xi\nonumber\\
=&   a_{\xi,i}\left( \mathit{v}_{\Bar{\xi},i} \gamma_i^{(1)}+\mathit{v}_{\xi,i} \gamma_i^{(2)} \right) +\mathrm{R}_i^{(1)} ,\label{equation:15}
\end{flalign}
where
\begin{flalign}
\gamma_i^{(1)}=&\hbar^{-1}_i \left[ \frac{1}{e^{\frac{a_ih_i}{\epsilon}}-1} \left\{ h_i \left( \frac{\epsilon}{a_i}-\xi_{i-1} \right)+\left( \frac{\xi_i^2}{2} - \frac{\xi_{i-1}^2}{2} \right) \right\} - \frac{\epsilon^2}{a_i^2} \right], \label{equation:16}\\
\gamma_i^{(2)}=&\hbar^{-1}_i \left[ \frac{1}{1-e^{-\frac{a_ih_{i+1}}{\epsilon}}} \left\{ h_{i+1} \left(\xi_{i+1} -\frac{\epsilon}{a_i} \right)-\left( \frac{\xi_{i+1}^2}{2} - \frac{\xi_{i}^2}{2} \right) \right\} + \frac{\epsilon^2}{a_i^2} \right], \label{equation:17}
\end{flalign}
and 
\begin{equation}
\label{equation:18}
\mathrm{R}_i^{(1)}=\frac{1}{2} \hbar^{-1}_i \int_{\xi_{i-1}}^{\xi_{i+1}} (\xi-\xi_i)(\xi-\xi_{i+1})a''(\tau_i(\xi))\mathit{v}'(\xi)\psi_i(\xi)\mathrm{d}\xi.
\end{equation}
Thereby, the identity (\ref{equation:12}) reduces to:
\begin{equation}
\label{equation:19}
\hbar^{-1}_i \int_{\xi_{i-1}}^{\xi_{i+1}} [\mathcal{L}_\epsilon \mathit{v}]\psi_i(\xi)\mathrm{d}\xi=\epsilon \mathit{v}_{\Bar{\xi} \xi,i}+\hat{a}_i^{(1)}\mathit{v}_{\bar{\xi},i}+\hat{a}_i^{(2)}\mathit{v}_{\xi,i} +\mathrm{R}_i^{(1)},
\end{equation}
where
\begin{equation}
\label{equation:20}
\hat{a}_i^{(1)}= a_i \chi_i^{(1)}+a_{\xi,i}\gamma_i^{(1)},
\end{equation}
\begin{equation}
\label{equation:21}
\hat{a}_i^{(2)}= a_i \chi_i^{(2)}+a_{\xi,i}\gamma_i^{(2)}.
\end{equation}
Upon substituting
$$\mathit{v}_{\Bar{\xi},i}=\mathit{v}_{\stackrel{0}{\xi},i}-\frac{h_{i+1}}{2}\mathit{v}_{\Bar{\xi}\xi,i},\ \ \mathit{v}_{\xi,i}=\mathit{v}_{\stackrel{0}{\xi},i}+\frac{h_{i}}{2}\mathit{v}_{\Bar{\xi}\xi,i}
$$
into (\ref{equation:19}), finally we get:
\begin{equation}
\label{equation:22}
\hbar^{-1}_i \int_{\xi_{i-1}}^{\xi_{i+1}} [\mathcal{L}_\epsilon \mathit{v}]\psi_i(\xi)\mathrm{d}\xi=\epsilon \theta_i \mathit{v}_{\Bar{\xi}\xi,i}+ A_i
\mathit{v}_{\stackrel{0}{\xi},i} +R_i^{(1)},
\end{equation}
where
\begin{equation}
\label{equation:23}
\theta_i= 1-\frac{\hat{a}_i^{(1)} h_{i+1}}{2\epsilon}+ \frac{\hat{a}_i^{(2)} h_{i}}{2\epsilon}\  \text{and}\  A_i=\hat{a}_i^{(1)}+\hat{a}_i^{(2)}.
\end{equation}
Similarly, we derive:

\begin{equation}
\label{equation:24}
\hbar_{i}^{-1} \int_{\xi_{i-1}}^{\xi_{i+1}} f(\xi) \psi_{i}(\xi) \mathrm{d} \xi= \hat{f}_i +\mathrm{R}_{i}^{(2)},
\end{equation}
where
\begin{equation}
\label{equation:25}
\hat{f}_i=f_{i}\left( \chi_i^{(1)}+\chi_i^{(2)}\right)+f_{\xi, i} \left( \gamma_i^{(1)}+\gamma_i^{(2)}\right)
\end{equation}
and
\begin{equation}
\label{equation:26}
\mathrm{R}_{i}^{(2)}  =\frac{1}{2}   \hbar_{i}^{-1} \int_{\xi_{i-1}}^{\xi_{i+1}}\left(\xi-\xi_{i}\right)\left(\xi-\xi_{i+1}\right) f^{\prime \prime}\left(\tau_{i}(\xi)\right) \psi_{i}(\xi) \mathrm{d} \xi .
\end{equation}
The approximation for the integral term of the right-hand side of (\ref{equation:10}) still has to be obtained.
Applying the Taylor expansion:

$$
K(\xi, \eta)=K\left(\xi_{i}, \eta\right)+\left(\xi-\xi_{i}\right) \frac{\partial}{\partial \xi} K\left(\xi_{i}, \eta\right)+\frac{\left(\xi-\xi_{i}\right)^{2}}{2} \frac{\partial^{2}}{\partial \xi^{2}} K\left(\tau_{i}(\xi), \eta\right),
$$
we get:
 \begin{flalign}
\hbar_{i}^{-1} \lambda &\int_{\xi_{i-1}}^{\xi_{i+1}} \psi_{i}(\xi)\left(\int_{0}^{T} K(\xi, \eta) \mathit{v}(\eta) \mathrm{d} \eta\right) \mathrm{d} \xi=\lambda \left( \chi_i^{(1)}+\chi_i^{(2)}\right) \int_{0}^{T} K \left(\xi_{i}, \eta\right) \mathit{v}(\eta) \mathrm{d} \eta \nonumber\\
&+ \lambda \left( \gamma_i^{(1)}+\gamma_i^{(2)}\right) \int_{0}^{T} \frac{\partial}{\partial \xi} K\left(\xi_{i}, \eta\right) \mathit{v}(\eta) \mathrm{d} \eta+\mathrm{R}_{i}^{(3)} \equiv \lambda \int_{0}^{T} \mathcal{K}\left(\xi_{i}, \eta\right) \mathit{v}(\eta) \mathrm{d} \eta+\mathrm{R}_{i}^{(3)},\label{equation:27}
\end{flalign}   
where

\begin{equation}
\label{equation:28}
\mathcal{K}\left(\xi_{i}, \eta\right)=K\left(\xi_{i}, \eta\right) \left( \chi_i^{(1)}+\chi_i^{(2)}\right) +  \frac{\partial}{\partial \xi} K \left(\xi_{i}, \eta\right) \left( \gamma_i^{(1)}+\gamma_i^{(2)}\right), 
\end{equation}

\begin{equation}
\label{equation:29}
\mathrm{R}_{i}^{(3)}=\frac{1}{2} \hbar_{i}^{-1} \int_{\xi_{i-1}}^{\xi_{i+1}}\left(\xi-\xi_{i}\right)^{2} \psi_{i}(\xi)\left(\int_{0}^{T} \frac{\partial^{2}}{\partial \xi^{2}} K \left(\tau_{i}(\xi), \eta\right) \mathit{v}(\eta) \mathrm{d} \eta\right) \mathrm{d} \xi .
\end{equation}
Now we require the composite trapezoidal rule on $[0, T]$ with integral remainder term:

\begin{equation}
\label{equation:30}
\int_{0}^{T} F(\eta) \mathrm{d} \eta=\sum_{i=0}^{\mathcal{N}} \hbar_{i} F_{i}+\mathrm{R}_{\mathcal{N}}
\end{equation}
and
$$
\mathrm{R}_{\mathcal{N}}=\frac{1}{2} \sum_{i=1}^{\mathcal{N}} \int_{\xi_{i-1}}^{\xi_{i}}\left(\tau-\xi_{i}\right)\left(\tau-\xi_{i-1}\right) F^{\prime \prime}(\tau) \mathrm{d} \tau.
$$
Next, apply the formula (\ref{equation:30}) on $[0, T]$ to calculate $\int_0^{T} \mathcal{K}(\xi_i, \eta)\mathit{v}(\eta)\mathrm{d} \eta$:
\begin{equation}
\label{equation:31}
\int_{0}^{T} \mathcal{K} \left(\xi_{i}, \eta\right) \mathit{v}(\eta) \mathrm{d} \eta=\sum_{j=0}^{\mathcal{N}} \hbar_{j} \mathcal{K}_{i j} \mathit{v}_{j}+\mathrm{R}_{i}^{(4)},
\end{equation}
where
\begin{equation}
\label{equation:32}
\mathrm{R}_{i}^{(4)}=\frac{1}{2} \sum_{j=1}^{\mathcal{N}} \int_{\xi_{j-1}}^{\xi_{j}}\left(\tau-\xi_{j}\right)\left(\tau-\xi_{j-1}\right) \frac{\mathrm{d}^{2}}{\mathrm{d} \tau^{2}}\left(\mathcal{K}\left(\xi_{i}, \tau\right) \mathit{v}(\tau)\right) \mathrm{d} \tau .
\end{equation}
Therefore the relation (\ref{equation:27}) reduces to:
\begin{equation}
\label{equation:33}
\hbar_{i}^{-1} \lambda \int_{\xi_{i-1}}^{\xi_{i+1}} \psi_{i}(\xi)\left(\int_{0}^{T} K(\xi, \eta) \mathit{v}(\eta) \mathrm{d} \eta\right) \mathrm{d} \xi =\lambda \sum_{j=0}^{\mathcal{N}} \hbar_{j} \mathcal{K}_{i j} \mathit{v}_{j}+\mathrm{R}_{i}^{(3)}+\mathrm{R}_{i}^{(4)}.
\end{equation}
By considering (\ref{equation:22}), (\ref{equation:24}), and (\ref{equation:33}) in (\ref{equation:10}), we derive the discrete identity for $\mathit{v}(\xi)$:

\begin{equation}
\label{equation:34}
 \epsilon \theta_i \mathit{v}_{\Bar{\xi} \xi,i}+ A_i
\mathit{v}_{\stackrel{0}{\xi},i}= \hat{f}_{i}+ \lambda \sum_{j=0}^{\mathcal{N}} \hbar_{j} \mathcal{K}_{i j} \mathit{v}_{j}+\mathrm{R}_i , \ 1\leq i \leq \mathcal{N}-1,
\end{equation}
with the remainder term:
\begin{equation}
\label{equation:35}
 \mathrm{R}_{i}= -\mathrm{R}_{i}^{(1)}+\mathrm{R}_{i}^{(2)}+\mathrm{R}_{i}^{(3)}+\mathrm{R}_{i}^{(4)},   
\end{equation}
where $\mathrm{R}_{i}^{(k)},(k=1,2,3,4)$ are defined by (\ref{equation:18}), (\ref{equation:26}), (\ref{equation:29}), and (\ref{equation:32}), respectively.
Neglecting $\mathrm{R}_i$ in (\ref{equation:34}) yields the difference scheme for solving the problem (\ref{equation:1})–(\ref{equation:2}):

\begin{equation}
\label{equation:36}
 \epsilon \theta_i y_{\Bar{\xi} \xi,i}+ A_i
y_{\stackrel{0}{\xi},i} =\hat{f}_{i}+ \lambda \sum_{j=0}^{\mathcal{N}} \hbar_{j} \mathcal{K}_{i j} y_{j}, \ 1\leq i \leq \mathcal{N}-1,
\end{equation}
\begin{equation}
\label{equation:37}
y_{0}=\alpha, \quad y_{\mathcal{N}}=\beta,   
\end{equation}
where $\theta_i,$  $A_i$, $\hat{f}_i $, and $\mathcal{K}_{i j}$ are given by (\ref{equation:23}),  (\ref{equation:23}), (\ref{equation:25}), and (\ref{equation:28}) respectively.
\section{Error analysis}
This section evaluates the proposed method's convergence. Suppose $z_i = y_i-\mathit{v}_i$ represents the error in the difference scheme (\ref{equation:36})-(\ref{equation:37}). Equations (\ref{equation:34}) and (\ref{equation:36}) provide $z_i$ as the discrete problem solution:
\begin{equation}
\label{equation:38}
 \epsilon \theta_i z_{\Bar{\xi} \xi,i}+ A_i
z_{\stackrel{0}{\xi},i} =-\mathrm{R}_{i}+ \lambda \sum_{j=0}^{\mathcal{N}} \hbar_{j} \mathcal{K}_{i j} z_{j}, \ 1\leq i \leq \mathcal{N}-1,
\end{equation}
\begin{equation}
\label{equation:39}
z_{0}=0, \quad z_{\mathcal{N}}=0,   
\end{equation}
\begin{lemma} \label{Lemma:2}
If the conditions of Lemma \ref{Lemma:1} hold, then the error
function $\mathrm{R}_i$ satisfies the estimate:
\begin{equation} \label{equation:40}
||\mathrm{R}||_{\infty,\Gamma_\mathcal{N}} \leq C \mathcal{N}^{-2} \ln \mathcal{N}.
\end{equation}
\end{lemma}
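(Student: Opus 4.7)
The plan is to bound each of the four remainder pieces $\mathrm{R}_i^{(k)}$ ($k=1,2,3,4$) from \eqref{equation:18}, \eqref{equation:26}, \eqref{equation:29}, \eqref{equation:32} separately, exploiting the piecewise uniform (Shishkin) structure of $\bar{\Gamma}_{\mathcal{N}}$ together with the derivative estimates \eqref{equation:3}--\eqref{equation:5} supplied by Lemma~\ref{Lemma:1}, and then collect the pieces through \eqref{equation:35}.

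The two ``smooth'' remainders $\mathrm{R}_i^{(2)}$ and $\mathrm{R}_i^{(3)}$ are essentially routine: the integrand of the former is $(\xi-\xi_i)(\xi-\xi_{i+1})f''(\tau_i(\xi))\psi_i(\xi)$ with $f\in C^2(\bar{\Gamma})$, while the integrand of the latter carries $(\partial^{2}K/\partial\xi^{2})(\tau_i(\xi),\eta)v(\eta)$, uniformly bounded by \eqref{equation:3} and the smoothness of $K$. Using $|\psi_i|\le 1$, $|(\xi-\xi_i)(\xi-\xi_{i+1})|\le C[\max(h_i,h_{i+1})]^{2}$, $\int_0^T|K(\xi_i,\eta)|\,d\eta\le\bar{K}$, and $\max_i\max(h_i,h_{i+1})\le 2T/\mathcal{N}$, both pieces contribute $O(\mathcal{N}^{-2})$ uniformly in $\epsilon$.

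The substantive work is concentrated in $\mathrm{R}_i^{(1)}$ and $\mathrm{R}_i^{(4)}$, where the layer enters through $v'$ and $v''$. For $\mathrm{R}_i^{(1)}$ I would split according to whether $\xi_i$ lies in the coarse zone ($\xi_i\ge\rho$) or the fine zone ($\xi_i<\rho$). In the coarse zone one has $e^{-\bar{a}\xi/\epsilon}\le \mathcal{N}^{-1}$ pointwise, so together with $h^{(2)}\le 2T/\mathcal{N}$ and the $L^1$ bound $\int_{\xi_{i-1}}^{\xi_{i+1}}\epsilon^{-1}e^{-\bar{a}\xi/\epsilon}\,d\xi\le C\mathcal{N}^{-1}$ the piece is immediately $O(\mathcal{N}^{-2})$; in the fine zone one exploits $h^{(1)}\le 2\bar{a}^{-1}\epsilon\mathcal{N}^{-1}\ln\mathcal{N}$ together with the $\epsilon$-uniform bound $\int_{\xi_{i-1}}^{\xi_{i+1}}\epsilon^{-1}e^{-\bar{a}\xi/\epsilon}\,d\xi\le\bar{a}^{-1}$, so that the $\epsilon^{-1}$ from \eqref{equation:4} is absorbed by $h^{(1)}$ to yield the target $O(\mathcal{N}^{-2}\ln\mathcal{N})$. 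For $\mathrm{R}_i^{(4)}$ the differentiation of $\mathcal{K}(\xi_i,\tau)v(\tau)$ via the Leibniz rule invokes $v''$, whose bound \eqref{equation:5} contains $\epsilon^{-2}e^{-\bar{a}\tau/\epsilon}$; I would split the sum $\sum_{j=1}^{\mathcal{N}}$ at $j=\mathcal{N}/2$ so that the coarse sum is $O((h^{(2)})^{2})=O(\mathcal{N}^{-2})$ since $v''$ is essentially $O(1)$ off the layer, while the fine sum is controlled by $(h^{(1)})^{2}\int_0^{\rho}\epsilon^{-2}e^{-\bar{a}\tau/\epsilon}\,d\tau\le C(h^{(1)})^{2}\epsilon^{-1}$; re-inserting $h^{(1)}=O(\epsilon\mathcal{N}^{-1}\ln\mathcal{N})$ produces again the bound $O(\mathcal{N}^{-2}\ln\mathcal{N})$.

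The main obstacle will be the sharp treatment of the layer contributions in $\mathrm{R}_i^{(1)}$ and $\mathrm{R}_i^{(4)}$: a naive use of the pointwise bounds $\|v'\|_\infty=O(\epsilon^{-1})$ and $\|v''\|_\infty=O(\epsilon^{-2})$ leaves factors $\epsilon^{-1}$ or $\epsilon^{-2}$ which the fine step $h^{(1)}$ cannot absorb. The resolution is to keep the exponential factor \emph{inside} the integral and rely on the identities $\int_0^T\epsilon^{1-j}e^{-\bar{a}\xi/\epsilon}\,d\xi\le C$ for $j=1,2$, in tandem with the engineered fine-mesh width $h^{(1)}=O(\epsilon\mathcal{N}^{-1}\ln\mathcal{N})$ --- the $\ln\mathcal{N}$ factor in \eqref{equation:40} originates precisely from this cancellation. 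A secondary technical point is the transition index $i=\mathcal{N}/2$ at which $h_i\ne h_{i+1}$; there the integration over $[\xi_{i-1},\xi_{i+1}]$ is broken into the two mesh zones and each half is estimated by the bounds above.
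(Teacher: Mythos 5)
Your proposal follows essentially the same route as the paper: bound $\mathrm{R}_i^{(2)}$ and $\mathrm{R}_i^{(3)}$ by $C\mathcal{N}^{-2}$ via the smoothness of $f$, $K$ and $|\psi_i|\le 1$, and handle the layer terms $\mathrm{R}_i^{(1)}$, $\mathrm{R}_i^{(4)}$ by keeping $e^{-\bar a\xi/\epsilon}$ inside the integrals, splitting at the transition point, and absorbing the factors $\epsilon^{-1}$, $\epsilon^{-2}$ with $h^{(1)}\le 2\bar a^{-1}\epsilon\mathcal{N}^{-1}\ln\mathcal{N}$, exactly as in \eqref{equation:42}--\eqref{equation:50}. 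The only detail to tidy up is the degenerate case $\rho=T/2$ (where $\xi\ge\rho$ does not guarantee $e^{-\bar a\xi/\epsilon}\le\mathcal{N}^{-1}$): there the mesh is uniform with $h\le 2\bar a^{-1}\epsilon\mathcal{N}^{-1}\ln\mathcal{N}$, so your fine-zone argument applies on the whole of $\bar\Gamma_{\mathcal{N}}$, which is precisely how the paper treats that case.
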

\begin{proof}
Firstly, we estimate $\mathrm{R}_i^{(2)}$, since $a \in C^2[0,T],\  |\xi-\xi_i| \leq \max ( h_i, h_{i+1} ),\  |\xi-\xi_{i+1}| \leq 2 \max ( h_i, h_{i+1} ),\  0 < \psi_i(\xi) \leq 1,\  \text{and}\   h^{(k)} \leq C\mathcal{N}^{-1},\  (k=1,2).$ From the equation (\ref{equation:26}), we then have: 
\begin{align}
  \left|\mathrm{R}_i^{(2)}\right| & \leq 
C \hbar^{-1}_i \int_{\xi_{i-1}}^{\xi_{i+1}} |(\xi-\xi_i)(\xi-\xi_{i+1})| \psi_i(\xi)\mathrm{d}\xi  \nonumber \\ & \leq 
C \left\{ \max ( h_i, h_{i+1} ) \right\} ^2 \nonumber \\& \leq C\mathcal{N}^{-2}.
\label{equation:41}
\end{align}
For $\mathrm{R}_i^{(1)}$, we get:
\begin{align}
  \left|\mathrm{R}_i^{(1)}\right| & \leq 
C \hbar^{-1}_i \int_{\xi_{i-1}}^{\xi_{i+1}} |(\xi-\xi_i)(\xi-\xi_{i+1}) \psi_i(\xi) \mathit{v}'(\xi)| \mathrm{d}\xi  \nonumber \\& \leq  
C \mathcal{N}^{-1}\int_{\xi_{i-1}}^{\xi_{i+1}} |\mathit{v}'(\xi)| \mathrm{d}\xi \nonumber \\& \leq  C\mathcal{N}^{-1} \int_{\xi_{i-1}}^{\xi_{i+1}} \frac{1}{\epsilon} e^{-\frac{\bar{a} \xi}{\epsilon}} \mathrm{d}\xi.
\label{equation:42}
\end{align}
Now, when $\rho=\frac{T}{2},$ we have $\frac{T}{2} \leq \bar{a}^{-1} \epsilon \ln \mathcal{N} \ \text{and}\  h^{(1)}=h^{(2)}=h=\frac{T}{2}$, then
$$\int_{\xi_{i-1}}^{\xi_{i+1}} \frac{1}{\epsilon} e^{-\frac{\bar{a} \xi}{\epsilon}} \mathrm{d}\xi \leq \frac{2h}{\epsilon} \leq \frac{4 \bar{a}^{-1}}{T} \mathcal{N}^{-1} \ln \mathcal{N} $$
Therefore (\ref{equation:42}) implies 
$$ \left|\mathrm{R}_i^{(1)}\right| \leq C \mathcal{N}^{-2} \ln \mathcal{N},\ \ \text{when}\ \  \rho=\frac{T}{2} $$  
and while $\rho=\bar{a}^{-1} \epsilon \ln \mathcal{N} \leq \frac{T}{2}$, then
$$\int_{\xi_{i-1}}^{\xi_{i+1}} \frac{1}{\epsilon} e^{-\frac{\bar{a} \xi}{\epsilon}} \mathrm{d}\xi \leq \frac{2|h|}{\epsilon} \leq C \mathcal{N}^{-1}  \ln \mathcal{N},\ \ 1 \leq i \leq \frac{\mathcal{N}}{2}$$
and\begin{small}

$$\int_{\xi_{i-1}}^{\xi_{i+1}} \frac{1}{\epsilon} e^{-\frac{\bar{a} \xi}{\epsilon}} \mathrm{d}\xi= \bar{a}^{-1} e^{-\frac{\bar{a} \xi_{i-1}}{\epsilon}} \left( 1-e^{-\frac{\bar{a} }{\epsilon}(\xi_{i+1}-\xi_{i-1}) } \right) \leq  C\mathcal{N}^{-1},\ \ \frac{\mathcal{N}}{2}+1\leq i \leq \mathcal{N} $$ 
\end{small}
Thereby, 
\begin{equation}
\label{equation:43}
\left|\mathrm{R}_i^{(1)}\right| \leq C \mathcal{N}^{-2} \ln \mathcal{N},\ \  \text{when}\ \ \rho=\bar{a}^{-1} \epsilon \ln \mathcal{N}. 
\end{equation}
Thirdly, for $\mathrm{R}_i^{(3)}$, the boundness of $\frac{\partial^{2}K}{\partial \xi^{2}}$ and $\|\mathit{v}\|_\infty \leq C$, we get
\begin{align}
  \left|\mathrm{R}_i^{(3)}\right| & \leq 
C \hbar^{-1}_i \int_{\xi_{i-1}}^{\xi_{i+1}} |\xi-\xi_i|^2 |\psi_i(\xi)|\mathrm{d}\xi  \nonumber \\ & \leq 
C \left\{ \max ( h_i, h_{i+1} ) \right\} ^2 \nonumber \\& \leq C\mathcal{N}^{-2}.
\label{equation:44}
\end{align}
Lastly, for the estimation of $\mathrm{R}_i^{(4)}$, we have:
\begin{align}
  \left|\mathrm{R}_i^{(4)}\right| & \leq 
\left| \frac{1}{2} \sum_{j=1}^{\mathcal{N}} \int_{\xi_{j-1}}^{\xi_{j}}\left(\tau-\xi_{j}\right)\left(\tau-\xi_{j-1}\right) \frac{\mathrm{d}^{2}}{\mathrm{d} \tau^{2}}\left(\mathcal{K}\left(\xi_{i}, \tau\right) \mathit{v}(\tau)\right) \mathrm{d} \tau\right| \nonumber \\ & \leq C \sum_{j=1}^{\mathcal{N}} \int_{\xi_{j-1}}^{\xi_{j}}\left(\xi_{j}-\tau\right)\left(\tau-\xi_{j-1}\right) (1+|\mathit{v}'(\xi)|+|\mathit{v}''(\xi)|) \mathrm{d} \tau \nonumber \\ & \leq C\left(\sum_{j=1}^{\mathcal{N}} h_j^{3}+ \sum_{j=1}^{\mathcal{N}} \int_{\xi_{j-1}}^{\xi_{j}}\left(\xi_{j}-\tau\right)\left(\tau-\xi_{j-1}\right) (|\mathit{v}'(\xi)|+|\mathit{v}''(\xi)|) \mathrm{d} \tau  \right) \nonumber \\ & \leq C\left(\sum_{j=1}^{\mathcal{N}} h_j^{3}+ \sum_{j=1}^{\mathcal{N}} \int_{\xi_{j-1}}^{\xi_{j}}\left(\xi_{j}-\tau\right)\left(\tau-\xi_{j-1}\right)\frac{1}{\epsilon^2}e^{-\frac{\bar{a} \tau}{\epsilon}}  \mathrm{d} \tau  \right).
\label{equation:45}
\end{align}
We can simplify the first term on the right side of the inequality (\ref{equation:45}) in the following way:
\begin{equation}
\label{equation:46}
\sum_{j=1}^{\mathcal{N}} h_j^{3}=\sum_{j=1}^{\frac{\mathcal{N}}{2}} \left( h^{(1)} \right)^3+\sum_{\frac{\mathcal{N}}{2}}^\mathcal{N} \left( h^{(2)} \right)^3 \leq C\mathcal{N}^{-2},
\end{equation}
and for the rest of the right side of the inequality (\ref{equation:45})  can be expressed as:
\begin{align}
 &\sum_{j=1}^{\frac{\mathcal{N}}{2}} \int_{\xi_{j-1}}^{\xi_{j}}\left(\xi_{j}-\tau\right)\left(\tau-\xi_{j-1}\right)\frac{1}{\epsilon^2}e^{-\frac{\bar{a} \tau}{\epsilon}}  \mathrm{d} \tau 
 \nonumber \\ &\leq \left| h^{(1)} \right|^2 \int^\rho_0 \frac{1}{\epsilon^2}e^{-\frac{\bar{a} \tau}{\epsilon}}  \mathrm{d} \tau \nonumber \\ & \leq  \left| h^{(1)} \right|^2 \bar{a}^{-1} \epsilon^{-1},
 \label{equation:47}
\end{align} 
now if $\rho = \bar{a}^{-1} \epsilon \ln \mathcal{N} < \frac{T}{2},$ then $ \left| h^{(1)} \right|^2 \bar{a}^{-1} \epsilon^{-1} < 2T\bar{a}^{-2}\mathcal{N}^{-2} \ln \mathcal{N},$
if $\rho = \frac{T}{2}$ then  $\left| h^{(1)} \right|^2 \bar{a}^{-1} \epsilon^{-1} < 2T\bar{a}^{-1}\mathcal{N}^{-2} \ln \mathcal{N},$ 
and
\begin{align}
 &\sum_{\frac{\mathcal{N}}{2}}^\mathcal{N} \int_{\xi_{j-1}}^{\xi_{j}}\left(\xi_{j}-\tau\right)\left(\tau-\xi_{j-1}\right)\frac{1}{\epsilon^2}e^{-\frac{\bar{a} \tau}{\epsilon}}  \mathrm{d} \tau 
 \nonumber \\ &\leq 2 \bar{a}^{-1} \sum_{\frac{\mathcal{N}}{2}}^\mathcal{N} \int_{\xi_{j-1}}^{\xi_{j}}\left(\xi_{j}-\tau-\frac{h^{(2)}}{2}\right) \frac{1}{\epsilon}e^{-\frac{\bar{a} \tau}{\epsilon}}  \mathrm{d} \tau \nonumber \\ & \leq 2\bar{a}^{-1} h^{(2)} \int^T_\rho \frac{1}{\epsilon}e^{-\frac{\bar{a} \tau}{\epsilon}}  \mathrm{d} \tau \nonumber \\ & = 2\bar{a}^{-2} h^{(2)} \left(e^{-\frac{\bar{a} \rho}{\epsilon}}-e^{-\frac{\bar{a} T}{\epsilon}} \right) \nonumber \\ & \leq 2\bar{a}^{-2} h^{(2)}\mathcal{N}^{-1} \nonumber \\ & \leq C\mathcal{N}^{-2}
 \label{equation:48}.
\end{align} 
The inequalities (\ref{equation:47}) and(\ref{equation:48}) implies 
\begin{equation}
 \sum_{j=1}^\mathcal{N}\int_{\xi_{j-1}}^{\xi_{j}}\left(\xi_{j}-\tau\right)\left(\tau-\xi_{j-1}\right)\frac{1}{\epsilon^2}e^{-\frac{\bar{a} \tau}{\epsilon}}  \mathrm{d} \tau \leq C\mathcal{N}^{-2}\ln \mathcal{N}, \forall \  i=1,2,\ldots,\mathcal{N}
 \label{equation:49} 
\end{equation}
based on (\ref{equation:46})-(\ref{equation:49}), from (\ref{equation:45}), we estimate the following inequality:
\begin{equation}\label{equation:50}
\left|\mathrm{R}_i^{(4)}\right| \leq C \mathcal{N}^{-2} \ln \mathcal{N}, \ \forall \  i=1,2,\ldots,\mathcal{N}. 
\end{equation}
 Therefore the inequality (\ref{equation:41}) together with (\ref{equation:43}),(\ref{equation:44}), and (\ref{equation:50}), we arrive at (\ref{equation:40}).
\end{proof}
Since $a(\eta)\geq \bar{a} > 0$, it follows that $A_i \geq \bar{A} > 0$, $\bar{A}$ is a number,  when $\mathcal{N}$ is large enough.
\begin{lemma} \label{Lemma:3}
Suppose that the conditions of Lemma \ref{Lemma:1} are satisfied and if  
$$|\lambda | < \frac{\bar{A}}{T\widetilde{K} },$$
where
$\widetilde{K}=\underset{1\leq i\leq \mathcal{N}}{\max} \sum_{j=1}^\mathcal{N} \hbar_j |\mathcal{K}_{ij} |, $ then the solution of the problem (\ref{equation:38}) and (\ref{equation:39}) satisfies:
\begin{equation}\label{equation:51}
 \|z\|_{\infty, \Bar{\Gamma}_\mathcal{N}} \leq C\|\mathrm{R}\|_{\infty, \Bar{\Gamma}_\mathcal{N}}.   
\end{equation}
\end{lemma}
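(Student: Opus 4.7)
The plan is to establish (\ref{equation:51}) via a discrete comparison principle applied to the local (differential) part of (\ref{equation:38}), then treat the nonlocal Fredholm term as a perturbation that is absorbed using the hypothesis on $|\lambda|$. Writing the scheme in nodal form, the operator $L^\mathcal{N} z_i := \epsilon\theta_i z_{\bar{\xi}\xi,i}+A_i z_{\stackrel{0}{\xi},i}$ has off-diagonal coefficient $\epsilon\theta_i/(\hbar_i h_{i+1})+A_i/(2\hbar_i)$ on $z_{i+1}$ and $\epsilon\theta_i/(\hbar_i h_i)-A_i/(2\hbar_i)$ on $z_{i-1}$, with diagonal equal to the negative of their sum. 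Since $A_i\geq \bar A>0$ for $\mathcal{N}$ large, the only non-trivial requirement for M-matrix structure is $2\epsilon\theta_i\geq A_i h_i$, which I would verify from the explicit form (\ref{equation:23}) of $\theta_i$ together with (\ref{equation:13})--(\ref{equation:14}) and (\ref{equation:16})--(\ref{equation:17}): after simplification these Bernoulli-type combinations give $\theta_i$ the usual exponential-fitting property that makes the inequality hold uniformly in $\epsilon$ on both the fine and the coarse portions of the Shishkin mesh.

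Once the discrete maximum principle is in hand, I would introduce the linear barrier $\Phi_i=(T+\delta-\xi_i)/\bar{A}$ with a small fixed $\delta>0$. A direct computation yields $L^\mathcal{N}\Phi_i=-A_i/\bar{A}\leq -1$ and $\Phi_i>0$ on $\bar{\Gamma}_\mathcal{N}$. Applying the comparison principle to $\pm z_i - M\Phi_i$, where $M$ bounds $|-\mathrm{R}_i+\lambda\sum_j\hbar_j\mathcal{K}_{ij}z_j|$, produces the pointwise estimate
\begin{equation*}
|z_i|\leq \frac{T+\delta}{\bar{A}}\max_{1\leq j\leq \mathcal{N}-1}\Bigl|-\mathrm{R}_j+\lambda\sum_{k=0}^{\mathcal{N}}\hbar_k\mathcal{K}_{jk}z_k\Bigr|.
\end{equation*}
Using the triangle inequality and the definition of $\widetilde{K}$, this gives
\begin{equation*}
\|z\|_{\infty,\bar{\Gamma}_\mathcal{N}}\leq \frac{T+\delta}{\bar{A}}\Bigl(\|\mathrm{R}\|_{\infty,\bar{\Gamma}_\mathcal{N}}+|\lambda|\widetilde{K}\|z\|_{\infty,\bar{\Gamma}_\mathcal{N}}\Bigr).
\end{equation*}
The assumption $|\lambda|<\bar{A}/(T\widetilde{K})$ means $(T+\delta)|\lambda|\widetilde{K}/\bar{A}<1$ for $\delta$ small enough, so I can absorb the $z$-term into the left-hand side to conclude (\ref{equation:51}) with $C=(T+\delta)/(\bar{A}-(T+\delta)|\lambda|\widetilde{K})$.

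The main obstacle I anticipate is the first step: verifying that the fitted coefficients $\theta_i$ and $A_i$ jointly enforce the M-matrix inequality $2\epsilon\theta_i\geq A_i h_i$ uniformly in $\epsilon$ in \emph{both} mesh regions. On the fine mesh $h^{(1)}=2\bar{a}^{-1}\epsilon\ln\mathcal{N}/\mathcal{N}$ the ratio $a_ih/\epsilon$ is small, so one must Taylor-expand the exponentials in (\ref{equation:13})--(\ref{equation:14}) and (\ref{equation:16})--(\ref{equation:17}); on the coarse mesh $a_ih/\epsilon$ can be large, so one must use the opposite asymptotic regime of those same Bernoulli-type expressions. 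Once this monotonicity is established, the barrier and absorption arguments are standard and routine.
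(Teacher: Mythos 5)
Your proof is correct in outline and arrives at exactly the inequality the paper uses, but by a different mechanism. The paper represents the solution of (\ref{equation:38})--(\ref{equation:39}) through a discrete Green's function $\mathcal{G}^{h}(\xi_i,\zeta_k)$ for the operator $-\epsilon\theta_i z_{\bar{\xi}\xi,i}-A_i z_{\stackrel{0}{\xi},i}$, writes $z_i=\sum_k\hbar_k\mathcal{G}^{h}(\xi_i,\zeta_k)\bigl(\lambda\sum_j\hbar_j\mathcal{K}_{kj}z_j-\mathrm{R}_k\bigr)$, and invokes the bound $0\leq\mathcal{G}^{h}\leq\bar{A}^{-1}$ (cited from Andreev--Savin) together with $\sum_k\hbar_k\leq T$ to obtain $\|z\|_{\infty,\Gamma_\mathcal{N}}\leq\bar{A}^{-1}T\bigl(|\lambda|\widetilde{K}\|z\|_{\infty,\Gamma_\mathcal{N}}+\|\mathrm{R}\|_{\infty,\Gamma_\mathcal{N}}\bigr)$; the absorption step under $|\lambda|<\bar{A}/(T\widetilde{K})$ is then identical to yours. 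Your barrier argument is the comparison-principle face of the same coin: nonnegativity of $\mathcal{G}^{h}$ is equivalent to the inverse monotonicity you need, and the bound $\mathcal{G}^{h}\leq\bar{A}^{-1}$ combined with $\sum_k\hbar_k\leq T$ is precisely what the linear barrier $(T+\delta-\xi_i)/\bar{A}$ delivers (you may even take $\delta=0$, since you only need the barrier to dominate $z$ at the endpoints where $z$ vanishes, which recovers the paper's constant $T/\bar{A}$ exactly). What your route buys is self-containedness, with no appeal to an external Green's-function theorem, at the price of having to verify the M-matrix condition $2\epsilon\theta_i\geq A_ih_i$ explicitly; you correctly flag this as the one nonroutine step and indicate the two asymptotic regimes of the fitted coefficients (\ref{equation:13})--(\ref{equation:14}), (\ref{equation:16})--(\ref{equation:17}) that must be examined. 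Note that the paper does not carry out this verification either: positivity of $\mathcal{G}^{h}$ presupposes the same monotonicity and is handled there purely by citation. So your proposal is at essentially the same level of rigor as the published proof, and completing either argument rigorously would require the same uniform-in-$\epsilon$ check of the sign of the off-diagonal entries.
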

\begin{proof}
Here, we employ discrete Green's function $\mathcal{G}^{h}\left(\xi_{i}, \zeta_{j}\right)$ for the operator:
$$
\begin{aligned}
& \mathcal{L}_\epsilon^{\mathcal{N}} z_{i}:=-\epsilon \theta_{i} z_{\bar{\xi} \xi, i}-A_{i} z_{0, i},\  1 \leq i \leq \mathcal{N}-1, \\
& z_{0}=z_{\mathcal{N}}=0 .
\end{aligned}
$$
Namely, the $\mathcal{G}^{h}\left(\xi_{i}, \zeta_{j}\right)$ is expressed as a function of $\xi_{i}$ for fixed $\zeta_{j}$ :
$$
\begin{aligned}
&\mathcal{L}_\epsilon^{\mathcal{N}} \mathcal{G}^{h}\left(\xi_{i}, \zeta_{j}\right)=\delta^{h}\left(\xi_{i}, \zeta_{j}\right), \xi_{i} \in \Gamma_{\mathcal{N}}, \zeta_{j} \in \Gamma_{\mathcal{N}} \\
& \mathcal{G}^{h}\left(0, \zeta_{j}\right)=\mathcal{G}^{h}\left(T, \zeta_{j}\right)=0, \zeta_{j} \in \Gamma_{\mathcal{N}},
\end{aligned}
$$
where the Kronecker delta is represented as $\delta^{h}\left(\xi_{i}, \zeta_{j}\right)=\hbar_i^{-1} \delta_{i j}$. Green's function yields the following solution for problems (\ref{equation:38}) and (\ref{equation:39}):
\begin{equation}
\label{equation:52}
z_{i}=\sum_{k=1}^{\mathcal{N}-1} \hbar_k \mathcal{G}^{h}\left(\xi_{i}, \zeta_{k}\right)\left(\lambda  \sum_{j=1}^{\mathcal{N}} \hbar_j \mathcal{K}_{k j} z_{j}-\mathrm{R}_{k}\right),\  \xi_{i} \in \Gamma_{\mathcal{N}}.    
\end{equation}
Similar to \cite[Theorem 1]{andreev1995convergence}, it can be shown that $0 \leq \mathcal{G}^{h}\left(\xi_{i}, \zeta_{k}\right) \leq \bar{A}^{-1}$. Therefore, we may construct the following estimate from (\ref{equation:52}):

$$
\begin{aligned}
\|z\|_{\infty, \Gamma_{\mathcal{N}}} & \leq \bar{A}^{-1} \sum_{k=1}^{\mathcal{N}-1} \hbar_k\left( |\lambda| \|z\|_{\infty, \Gamma_{\mathcal{N}}}    \sum_{j=1}^{\mathcal{N}} \hbar_j\left|\mathcal{K}_{k j}\right|+\|\mathrm{R}\|_{\infty, \Gamma_{\mathcal{N}}}\right) \\
& \leq \bar{A}^{-1} T\left(\|z\|_{\infty, \Gamma_{\mathcal{N}}}|\lambda| \widetilde{K} +\|\mathrm{R}\|_{\infty, \Gamma_{\mathcal{N}}}\right),
\end{aligned}
$$
which implies validity of (\ref{equation:51}).
\end{proof} 
\begin{theorem}
Let $\mathit{v}$ be the solution of the problem in (\ref{equation:1}) and (\ref{equation:2}), and $y$ be the solution of the discrete problem in (\ref{equation:36}) and (\ref{equation:37}), then the following $\epsilon-$uniform estimate satisfied:
\begin{equation}
\label{equation:53}
\|\mathit{v}-y\|_{\infty, \Bar{\Gamma}_\mathcal{N}} \leq C\mathcal{N}^{-2} \ln{\mathcal{N}}.
\end{equation}
\end{theorem}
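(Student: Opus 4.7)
The strategy is essentially a one-line assembly of the two lemmas already proved, together with a quick check that the hypotheses of those lemmas are indeed in force. Writing $z_i = y_i - \mathit{v}_i$, subtracting the discrete identity (\ref{equation:34}) from the scheme (\ref{equation:36}) produces exactly the perturbed discrete problem (\ref{equation:38})--(\ref{equation:39}), in which the forcing is the truncation error $-\mathrm{R}_i$ plus a self-coupling term through the discretised integral. This reduction is the whole point of the error analysis section, so no new computation is needed here.

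The plan is then: first, invoke Lemma \ref{Lemma:3} applied to the error equation (\ref{equation:38})--(\ref{equation:39}). Under the smallness condition $|\lambda| < \bar{A}/(T\widetilde{K})$, which we inherit as a standing hypothesis consistent with the condition on $\lambda$ in Lemma \ref{Lemma:1}, this yields
\begin{equation*}
\|\mathit{v}-y\|_{\infty,\bar{\Gamma}_\mathcal{N}} \;=\; \|z\|_{\infty,\bar{\Gamma}_\mathcal{N}} \;\leq\; C\,\|\mathrm{R}\|_{\infty,\bar{\Gamma}_\mathcal{N}}.
\end{equation*}
Second, invoke Lemma \ref{Lemma:2}, which bounds the truncation error uniformly in $\epsilon$:
\begin{equation*}
\|\mathrm{R}\|_{\infty,\Gamma_\mathcal{N}} \;\leq\; C\,\mathcal{N}^{-2}\ln\mathcal{N}.
\end{equation*}
Chaining these two estimates gives the claimed bound (\ref{equation:53}).

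Since both ingredients are already established, there is no genuine obstacle left in the proof of the theorem itself; the real work was done in Lemma \ref{Lemma:2}, where the splitting of the truncation error $\mathrm{R}_i = -\mathrm{R}_i^{(1)}+\mathrm{R}_i^{(2)}+\mathrm{R}_i^{(3)}+\mathrm{R}_i^{(4)}$ had to be controlled on the fine and coarse portions of the Shishkin-type mesh using the layer bounds (\ref{equation:4})--(\ref{equation:5}), and in Lemma \ref{Lemma:3}, which rests on the discrete Green's-function bound $0\le \mathcal{G}^h \le \bar{A}^{-1}$. The only thing worth mentioning explicitly in the proof is that the constant $C$ in the final estimate is independent of both $\epsilon$ and $\mathcal{N}$, inherited as the product of the $\epsilon$-uniform constants from the two lemmas, which is what delivers the $\epsilon$-uniform (parameter-robust) convergence of order $\mathcal{N}^{-2}\ln\mathcal{N}$ in the discrete maximum norm.
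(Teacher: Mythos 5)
Your proposal is correct and is essentially the paper's own argument: the authors likewise obtain (\ref{equation:53}) by combining the truncation-error bound of Lemma \ref{Lemma:2} with the stability estimate of Lemma \ref{Lemma:3} applied to the error problem (\ref{equation:38})--(\ref{equation:39}). Your additional remarks on the hypotheses and the $\epsilon$-independence of the constant are consistent with, if more explicit than, the one-line proof in the paper.
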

\begin{proof}
Combining Lemma \ref{Lemma:2} and Lemma \ref{Lemma:3} , we immediately have (\ref{equation:53}).
\end{proof}
\section{Numerical results}
Numerical evaluations for a test problem are provided to assess the effectiveness of the numerical technique that was suggested earlier. The convergence rate and maximum pointwise error, which have been calculated, are displayed in tabular form.
\begin{table}[h!] 
    \centering
    \caption{Results for 
 Example \ref{Example1}: Convergence rates and maximum pointwise errors on $\Bar{\Gamma}_\mathcal{N}$.} 
    \label{tab:table1}
\begin{tabular}{|llllll|}
\hline
$\epsilon$  & $\mathcal{N}=64$ & $\mathcal{N}=128$ & $\mathcal{N}=256$ & $\mathcal{N}=512$ & $\mathcal{N}=1024$ \\
\hline
$2^{0}$  & $2.389e-6$ & $6.057e-7$ & $1.525e-7$ & $3.824e-8$ & $9.586e-9$ \\
 & $1.98$ & $1.99$ & $2.00$ & $2.00$ &  \\
$2^{-6}$  & $4.282e-5$ & $6.154e-7$ & $1.635e-6$ & $6.458e-7$ & $1.896e-7$ \\
 & $6.12$ & $-1.41$ & $1.34$ & $1.77$ &  \\
$2^{-12}$  & $1.012e-4$ & $2.544e-5$ & $6.348e-6$ & $1.566e-6$ & $3.750e-7$ \\
 & $1.99$ & $2.00$ & $2.02$ & $2.06$ &  \\
$2^{-18}$  & $1.0156e-4$ & $2.560e-5$ & $6.427e-6$ & $1.6099e-6$ & $4.0278e-7$ \\
 & $1.99$ & $1.99$ & $2.00$ & $2.00$ &  \\
$2^{-24}$  & $1.0157e-4$ & $2.561e-5$ & $6.428e-6$ & $1.6104e-6$ & $4.0302e-7$ \\
  & $1.99$ & $1.99$ & $2.00$ & $2.00$ &  \\
  \hline 
$e^\mathcal{N}$& $1.0157e-4$ & $2.561e-5$ & $6.428e-6$ & $1.6104e-6$ & $4.0302e-7$ \\
 $p^\mathcal{N}$ & $1.99$ & $1.99$ & $2.00$ & $2.00$ &  \\
\hline
\end{tabular}
\end{table}
\\
The maximum pointwise error is specified by:
$$E_\epsilon^\mathcal{N}=\|\mathit{v}-y\|_{\infty,\Bar{\Gamma}}$$
where $\mathit{v}$ is the exact solution and $y$ is approximate solution.  In addition, the estimates of the $\epsilon-$uniform maximum pointwise error are derived from:
$$E^\mathcal{N}=\underset{\epsilon}{\max}\  
 E_\epsilon^\mathcal{N}.$$
Convergence rates are calculated by:
$$P^\mathcal{N}_\epsilon=\frac{\ln{(E^\mathcal{N}_\epsilon/E^{2\mathcal{N}}_\epsilon})}{\ln{2}}.$$
and  $\epsilon-$uniform convergence rates are derived by:
$$P^\mathcal{N}=\frac{\ln{(E^\mathcal{N}/E^{2\mathcal{N}}})}{\ln{2}}.$$
\begin{example} \label{Example1}
\text{Consider the particular problem from} \cite{uniform_cimen_2021}:
$$\epsilon \mathit{v}''(\xi)+2\mathit{v}'(\xi)=e^\xi-\frac{1}{4} \int_0^1 e^{\xi-\eta}\mathit{v}(\eta) \mathrm{d}\eta,\ 0<\xi<1,$$
$$\mathit{v}(0)=0,\ \mathit{v}(1)=1,$$
\end{example}
in which the exact solution is $$\mathit{v}(\xi)= \frac{d_1 -1}{2+\epsilon}(1-e^\xi)+d_2 \frac{1-e^{-\frac{2\xi}{\epsilon}}}{1-e^{-\frac{2}{\epsilon}}},$$
where
\begin{align*}
d_1=&\frac{(3+\epsilon -e)(2-2e+\epsilon (1-e^{-\frac{2}{\epsilon}})) +(2+\epsilon)(e^{-\frac{2}{\epsilon}}-1)}{4e(2+\epsilon )^2 (e^{-\frac{2}{\epsilon}}-1)-(4e+\epsilon e-2e^2)+(2+\epsilon e)e^{-\frac{2}{\epsilon}}}\\
d_2=&1+\frac{(d_1-1)(e-1)}{2+\epsilon}.\\
\end{align*}
\begin{figure}[h!]
  \centering
  \includegraphics[width=1\textwidth]{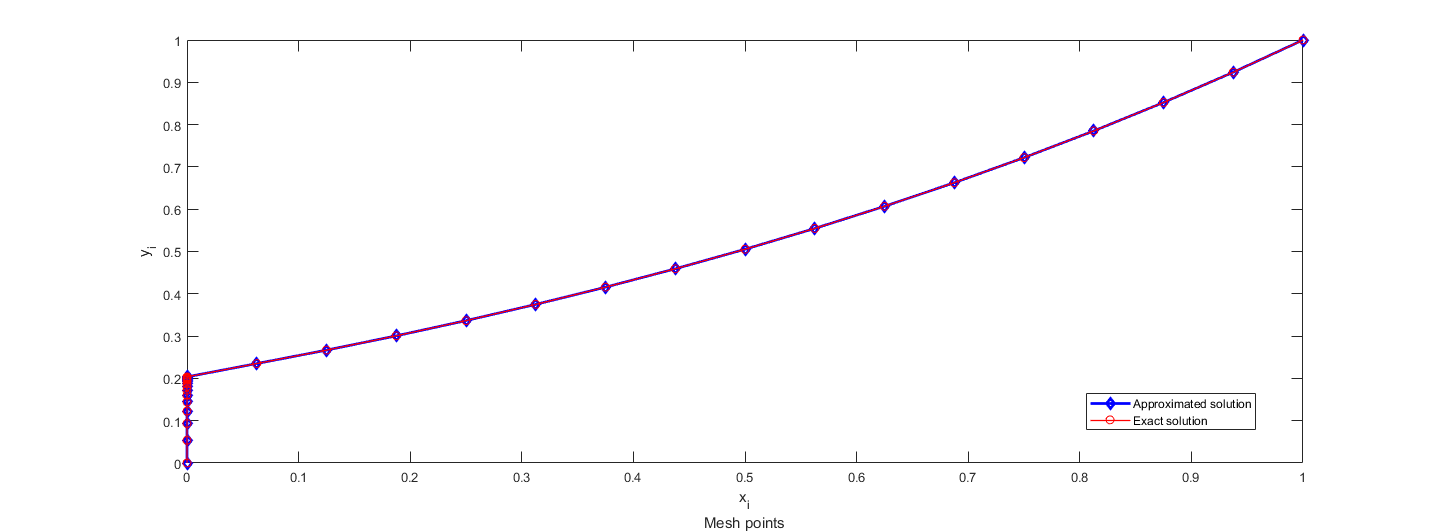}
  \caption{Solution plot of Example \ref{Example1} with $\epsilon = 2^{-24}$ and $\mathcal{N} = 128$.} 
  \label{fig:yourlabel}
\end{figure}
\begin{figure}[h!]
  \centering
  \includegraphics[width=1\textwidth]{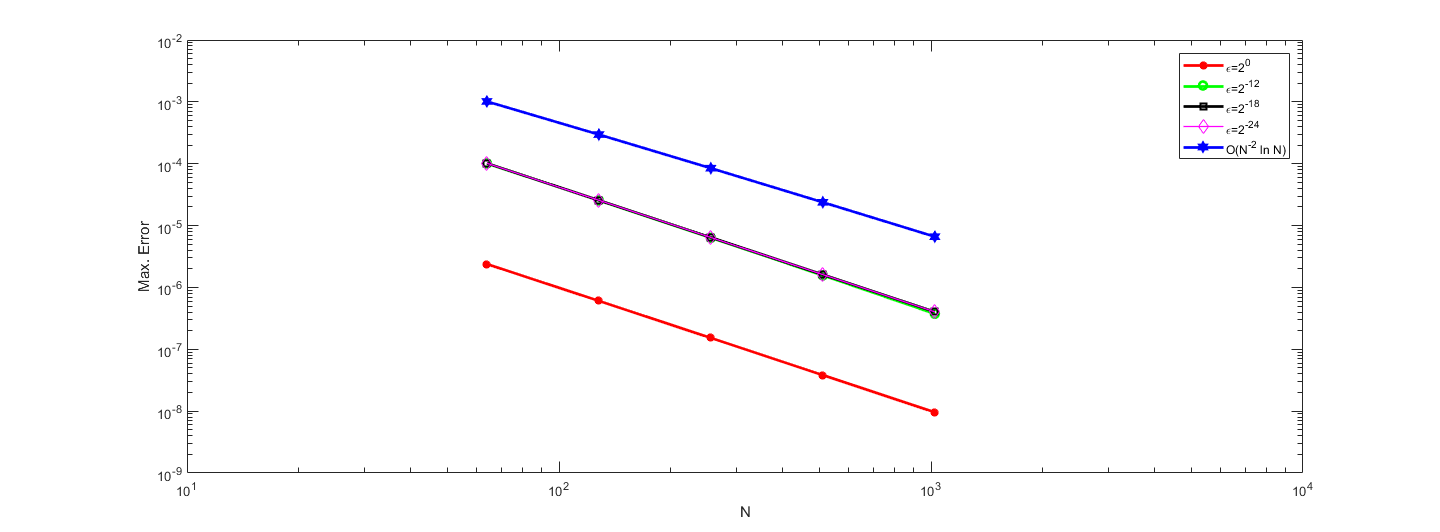}
  \caption{Loglog plots of maximum pointwise error with 
  various values of $\epsilon$ for Example \ref{Example1}}
  \label{fig:yourlabel2}
\end{figure}
Our theoretical examination demonstrates that the devised technique exhibits almost second-order uniform convergence, irrespective of 
$\epsilon$, as stated in the above section mentioned theorem. This assertion is substantiated by the numerical findings showcased in Table \ref{tab:table1} and Figures \ref{fig:yourlabel}-\ref{fig:yourlabel2}.

\section{Conclusion}
We have provided a new technique to tackle the numerical solution of a class of SPVIDEs, employing integral identities with exponential basis functions and quadrature rules. The scheme is designed on a non-uniform mesh and a thorough error analysis has been conducted, along with the resolution of a test problem. The results are shown in Figures \ref{fig:yourlabel}-\ref{fig:yourlabel2} and Tables \ref{tab:table1}, as the analysis shows, the uniform convergence rate is $O(\mathcal{N}^{-2} \ln \mathcal{N})$. These calculations affirm the stability and efficacy of the proposed method for addressing these issues. 
\nocite{*}
\bibliographystyle{acm}

\end{document}